\documentclass[review]{elsarticle}

\usepackage{lineno,hyperref}
\modulolinenumbers[5]
\usepackage{amsmath, amsfonts,amsthm}
\newcommand\cplus{\mathbin{\raisebox{-\height}{$+$}}}
\newtheorem{theorem}{Theorem}

\newtheorem{corollary}{Corollary}
\journal{arXiv.org}









\bibliographystyle{elsarticle-num}

\begin{document}

\begin{frontmatter}

\title{Generalized continued fraction expansions for $\pi$ and $e$}

\author[mymainaddress]{Shirali Kadyrov}
\address[mymainaddress]{Department of Mathematics and Natural Sciences, Suleyman Demirel University, Kaskelen, Kazakhstan}
\ead{shirali.kadyrov@sdu.edu.kz}
\author[mymainaddress,mysecondaryaddress]{Farukh Mashurov}
\address[mysecondaryaddress]{Kazakh-British Technical University, Almaty, Kazakhstan}

\ead{farukh.mashurov@sdu.edu.kz}



\begin{abstract}
Recently Raayoni et al. announced various conjectures on  continued fractions of fundamental constants automatically generated with machine learning techniques. In this paper we prove some of their stated conjectures for Euler number $e$ and show the equivalence of some of the listed conjectures. Moreover, we propose a simple method that can be used to generate other continued fractions using their series representations.
\end{abstract}

\begin{keyword}
Generalized Continued Fraction, $\pi$, Euler number $e$, Ramanujan Machine.
\MSC[2010] 	11Y65 \sep  30B70
\end{keyword}

\end{frontmatter}

\linenumbers

\section{Introduction}

Continued fraction theory is one of the old area of number theory that dates back to Bombelli from 16th Century \cite{jones1980continued}. The infinite continued fractions were first studied by Lord Brouncker who stated the well-known continued fraction for $4/\pi$. All giants of number theory including Euler, Gauss, Lagrange, Legendre, and Galois contributed to the field. For any given sequences $(a_n)$ and $(b_n)$ continued fraction is written as
\begin{equation}\label{eqn:cf}
b_0+\cfrac{a_1}{b_1+\cfrac{a_2}{b_2+\cfrac{a_3}{ b_3+ \ddots }}}\,, \text{  or  }b_0+\frac{a_1}{b_1}\cplus \frac{a_2}{b_2} \cplus \frac{a_3}{b_3} \cplus\cdots. 
\end{equation}
For any $n$, a natural number, \emph{$n$'th convergent} of \eqref{eqn:cf} is given by
\begin{equation*}
\frac{A_n}{B_n}:=b_0+\frac{a_1}{b_1}\cplus \frac{a_2}{b_2} \cplus \frac{a_3}{b_3} \cplus\cdots \cplus \frac{a_n}{b_n}.
\end{equation*}
We say that a given number $x$ has \emph{continued fraction expansion} \eqref{eqn:cf} provided 
$$\lim_{n \to \infty} \frac{A_n}{B_n}=x.$$
Inductively it is easy to verify that for $n \ge 1$ the sequences $(A_n)$ and $(B_n)$ satisfy the following difference equations
\begin{align} \label{eqn:dif1}
    A_n&=b_n A_{n-1}+a_nA_{n-2}\\
    B_n&=b_n B_{n-1}+a_nB_{n-2}\label{eqn:dif2}
\end{align}
with initial conditions 
\begin{equation}\label{eqn:ic}
A_{-1}=1,A_0=b_0,B_{-1}=0, \text{ and } B_0=1.
\end{equation}

When $a_n=1$ and $b_n \in \mathbb N$ for all $n\ge 1$, \eqref{eqn:cf} is called the simple continued fraction and it is a generalized continued fraction otherwise. Computing closed form solutions to difference equations as in \eqref{eqn:dif1} and \eqref{eqn:dif2} with variable coefficients $a_n,b_n$ is difficult in general \cite{mallik1998solutions}. An elementary approach is to guess the closed form of $A_n$ and $B_n$ from the first few terms and prove it using mathematical induction. Recently, this approach was used in \cite{lu2019elementary} to prove one of the many conjectures on generalized continued fraction expansion for Euler number $e$ listed in \cite{raayoni2019ramanujan}. Machine learning techniques, namely Meet-In-The-Middle and Gradient Descent, were implemented in \cite{raayoni2019ramanujan} to obtain various conjectures for mathematical constants $\pi$ and $e$ listed in \url{www.RamanujanMachine.com}.    

In this note our aim is twofold: First, we prove some of the conjectures from \url{www.RamanujanMachine.com} using similar approach to \cite{lu2019elementary} or show the equivalence of certain conjectures. Next, we describe a twisted approach to find and prove new generalized continued fractions for mathematical constants. 

For the first objective we have the following, Throughout $e=2.7182\dots$ stands for the Euler number.

\begin{theorem} \label{thm1}
We have
$$\frac{e}{2}=1+\frac{1}{2}\cplus\frac{3}{3}\cplus\frac{4}{4}\cplus \frac{5}{5}\cplus \dots.$$
\end{theorem}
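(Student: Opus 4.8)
The plan is to follow the elementary guess-and-verify strategy described in the introduction: compute the first several numerators $A_n$ and denominators $B_n$ from \eqref{eqn:dif1}--\eqref{eqn:dif2} and the initial conditions \eqref{eqn:ic}, conjecture closed forms for them, prove these by induction, and then pass to the limit using the series for $e^{-1}$.

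First I would record the coefficients of the stated expansion: $b_0=1$, $b_1=2$, $a_1=1$, and $a_n=b_n=n+1$ for every $n\ge 2$. A short computation gives $(A_0,\dots,A_4)=(1,3,12,60,360)$ and $(B_0,\dots,B_4)=(1,2,9,44,265)$. The numerators suggest $A_n=\tfrac{(n+2)!}{2}$, and the denominators match $B_n=(n+2)!\sum_{k=0}^{n+2}\frac{(-1)^k}{k!}$, i.e.\ the derangement numbers $D_{n+2}$.

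Next I would establish both closed forms simultaneously by induction on $n$. The cases $n=0,1$ are checked directly against the values above; these must be treated as separate base cases because the first partial numerator $a_1=1$ does not follow the rule $a_n=n+1$. For the inductive step with $n\ge 2$ one has $a_n=b_n=n+1$, so \eqref{eqn:dif1}--\eqref{eqn:dif2} both reduce to $X_n=(n+1)\bigl(X_{n-1}+X_{n-2}\bigr)$. Substituting the guess for $A_n$ and using $(n+1)!+n!=(n+2)\,n!$ reproduces $A_n=\tfrac{(n+2)!}{2}$, while the classical derangement recurrence $D_m=(m-1)\bigl(D_{m-1}+D_{m-2}\bigr)$ with $m=n+2$ (or a direct manipulation of the partial sums) reproduces $B_n$.

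With the closed forms in hand, the convergents simplify to
\begin{equation*}
\frac{A_n}{B_n}=\frac{(n+2)!/2}{(n+2)!\sum_{k=0}^{n+2}\frac{(-1)^k}{k!}}=\frac{1}{2\sum_{k=0}^{n+2}\frac{(-1)^k}{k!}},
\end{equation*}
so that as $n\to\infty$ the denominator tends to $2e^{-1}$ and the whole expression to $e/2$, which proves the claim. I expect the only genuine obstacle to be guessing the denominator: recognizing the sequence $1,2,9,44,265$ as the derangements (or in the partial-sum form that makes the limit transparent) is the crux, after which both the induction and the limit are routine.
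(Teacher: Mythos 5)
Your proposal is correct and follows essentially the same route as the paper: compute the first convergents, recognize $A_n=(n+2)!/2$ and $B_n$ as the derangement numbers $(n+2)!\sum_{k=0}^{n+2}(-1)^k/k!$, confirm by induction, and take the limit using $\sum_k(-1)^k/k!=e^{-1}$. Your write-up is in fact slightly more explicit than the paper's, which identifies the sequences via OEIS and leaves the induction (via the derangement recurrence you state) to the reader.
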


\begin{theorem}\label{thm:em2}We have
$$e-2=1+\frac{-1}{1}\cplus \frac{2}{1} \cplus \frac{-1}{1} \cplus\frac{3}{1} \cplus \frac{-1}{1}\cplus \frac{4}{1} \cplus\cdots,$$
\end{theorem}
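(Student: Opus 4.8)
The plan is to pass to the even-indexed convergents, where the nuisance partial numerators $-1$ can be eliminated. Writing the partial quotients as $b_n=1$ for all $n$, $a_{2j-1}=-1$ and $a_{2j}=j+1$, I would combine \eqref{eqn:dif1} and \eqref{eqn:dif2} over two consecutive indices to show that $P_m:=A_{2m}$ and $Q_m:=B_{2m}$ both satisfy the single three-term recurrence
\[ x_m=(m+1)x_{m-1}+m\,x_{m-2}, \]
with $P_0=1,\ P_1=2$ and $Q_0=1,\ Q_1=3$. It then suffices to prove $P_m/Q_m\to e-2$; the odd convergents will be shown to share this limit at the end.

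Following the guess-and-induct strategy of \cite{lu2019elementary}, I would next pin down the denominators in closed form. Letting $D_n$ denote the derangement numbers (subfactorials), the data suggest $Q_m=D_{m+2}+D_{m+1}$, which I would verify by induction using the recurrence above and the classical identity $D_n=(n-1)(D_{n-1}+D_{n-2})$. That same identity recasts the formula as $Q_m=D_{m+3}/(m+2)$ and $Q_{m-1}=D_{m+2}/(m+1)$, the shape I will need below. I do not expect the numerators $P_m$ to admit a comparably clean formula --- they form a genuinely ``second-kind'' solution of the recurrence --- so the limit must be extracted without an explicit expression for $P_m$.

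The device that makes this possible is the determinant (Casoratian) identity. Since the Casoratian $W_m=P_mQ_{m-1}-P_{m-1}Q_m$ of two solutions of the recurrence satisfies $W_m=-m\,W_{m-1}$, an immediate induction gives $W_m=(-1)^m m!$, and telescoping the differences of successive convergents yields
\[ \frac{P_M}{Q_M}=1+\sum_{m=1}^{M}\frac{(-1)^m m!}{Q_mQ_{m-1}}. \]
Substituting $Q_m=D_{m+3}/(m+2)$, $Q_{m-1}=D_{m+2}/(m+1)$ and then $D_n=n!\,s_n$ with $s_n:=\sum_{k=0}^{n}(-1)^k/k!$, the general term collapses (after reindexing $n=m+2$) to $(-1)^n/\big((n+1)!\,s_{n+1}s_n\big)$; the elementary relation $s_{n+1}-s_n=(-1)^{n+1}/(n+1)!$ then turns it into the perfect telescope $1/s_{n+1}-1/s_n$.

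This last simplification is the step I expect to be the crux: spotting that $m!/(Q_mQ_{m-1})$ unwinds, through the subfactorial identities, into a telescoping series for $1/s_n$. Granting it, the sum equals $\lim_{N}1/s_{N+1}-1/s_3=e-1/s_3=e-3$ (using $s_n\to e^{-1}$ and $s_3=\tfrac13$), so $P_M/Q_M\to 1+(e-3)=e-2$. It remains only to confirm convergence of the full fraction: since $\big|A_{2m+1}/B_{2m+1}-A_{2m}/B_{2m}\big|=(m+1)!/\big(B_{2m+1}B_{2m}\big)$ with $B_{2m+1}=D_{m+2}\sim (m+2)!/e$, consecutive convergents differ by a quantity tending to $0$, forcing the odd convergents to the same value $e-2$.
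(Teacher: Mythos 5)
Your argument is correct, and it takes a genuinely different route from the paper's. The paper proceeds by guessing closed forms for all four subsequences $A_{2n-1},A_{2n},B_{2n-1},B_{2n}$ in terms of the derangement numbers $D_n=n!\sum_{k=0}^n(-1)^k/k!$ (including the somewhat unwieldy numerator formula $A_{2n}=(n+1)!+(n+2)!-2(D_{n+1}+D_{n+2})$) and then computes the limits of the odd and even convergents separately. You instead contract to the even convergents, whose recurrence $x_m=(m+1)x_{m-1}+mx_{m-2}$ I have checked, identify \emph{only} the denominators $Q_m=D_{m+2}+D_{m+1}=D_{m+3}/(m+2)$, and then sidestep the numerators entirely via the Casoratian $W_m=(-1)^m m!$ and the telescoping identity $\frac{(-1)^n}{(n+1)!\,s_{n+1}s_n}=\frac{1}{s_{n+1}}-\frac{1}{s_n}$; the arithmetic here ($1/s_3=3$, sum $\to e-3$, limit $1+(e-3)=e-2$) is right, as is the final estimate $(m+1)!/(B_{2m+1}B_{2m})\to 0$ forcing the odd convergents to the same limit. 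What your approach buys is that you never need a closed form for the "second-kind" solution $P_m$, and your index bookkeeping is in fact cleaner than the paper's (the paper's stated formula $B_{2n-1}=n!\sum_{k=0}^n(-1)^k/k!$ is off by one --- it gives $B_1=0$ rather than $1$ --- though this does not affect its limit computation; your $B_{2m+1}=D_{m+2}$ matches the actual data). The one step you assert without derivation is precisely that identity $B_{2m+1}=D_{m+2}$; it follows in one line from $B_{2m+1}=B_{2m}-B_{2m-1}$ together with the inductive hypothesis $B_{2m-1}=D_{m+1}$ and $B_{2m}=D_{m+2}+D_{m+1}$, so you should record that induction to close the argument.
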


Both were conjectured in \url{www.RamanujanMachine.com}. The following simple observations can be used to prove the equivalence of two continued fractions.

\begin{theorem}\label{thm:mp} We have
$$-\left(b_0+\frac{a_1}{b_1}\cplus \frac{a_2}{b_2} \cplus \frac{a_3}{b_3} \cplus\frac{a_4}{b_4} \cplus\cdots\right)=-b_0+\frac{a_1}{-b_1}\cplus \frac{a_2}{-b_2} \cplus \frac{a_3}{-b_3}\frac{a_4}{-b_4} \cplus \cplus\cdots$$
$$b_0+\frac{a_1}{b_1}\cplus \frac{a_2}{b_2} \cplus \frac{a_3}{b_3} \cplus\frac{a_4}{b_4} \cplus\cdots=b_0+\frac{-a_1}{-b_1}\cplus \frac{-a_2}{b_2} \cplus \frac{-a_3}{-b_3} \cplus\frac{-a_4}{b_4} \cplus\cdots$$
\end{theorem}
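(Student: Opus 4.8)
\section*{Proof proposal for Theorem~\ref{thm:mp}}

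The plan is to prove both identities at the level of the convergents, comparing the numerator/denominator recurrences \eqref{eqn:dif1}--\eqref{eqn:dif2} of the transformed continued fraction against those of the original, and then passing to the limit. Throughout I write $\tilde A_n,\tilde B_n$ for the convergents of the continued fraction appearing inside the parentheses on the left, with initial data given by \eqref{eqn:ic}.

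For the first identity I would let $A_n,B_n$ denote the convergents of the right-hand continued fraction, whose parameters are obtained by replacing every $b_k$ (including $b_0$) with $-b_k$ while keeping every $a_k$ fixed. I claim
$$A_n=(-1)^{n+1}\tilde A_n,\qquad B_n=(-1)^{n}\tilde B_n,$$
and would prove this by induction on $n$. The base cases $n=-1,0$ follow directly from \eqref{eqn:ic} together with $b_0\mapsto -b_0$, and the inductive step is a one-line computation: substituting $b_n\mapsto -b_n$ into \eqref{eqn:dif1} and using the parity identity $(-1)^{n-1}=(-1)^{n+1}$ factors out $(-1)^{n+1}$ cleanly, and likewise $(-1)^{n-2}=(-1)^{n}$ handles \eqref{eqn:dif2}. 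Dividing gives $A_n/B_n=-\tilde A_n/\tilde B_n$ for every $n$, and letting $n\to\infty$ produces exactly the claimed sign change.

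For the second identity I would recognize the right-hand side as the image of the left-hand side under the classical equivalence transformation with scaling sequence $c_n=(-1)^{n}$ and $c_0=1$: one checks that $c_{n-1}c_n=-1$ sends $a_n\mapsto -a_n$, while $c_n b_n=(-1)^{n}b_n$ reproduces the pattern $-b_1,b_2,-b_3,b_4,\dots$ and leaves $b_0$ fixed. Writing $A_n^{\ast},B_n^{\ast}$ for the convergents of the transformed fraction and $P_n=\prod_{k=1}^{n}c_k$ (with $P_{-1}=P_0=1$), I would show by induction that $A_n^{\ast}=P_n\tilde A_n$ and $B_n^{\ast}=P_n\tilde B_n$; the step uses the telescoping relations $P_n=c_nP_{n-1}$ and $P_{n-2}=P_{n-1}/c_{n-1}$ to pull the factor $P_n$ out of the transformed versions of \eqref{eqn:dif1}--\eqref{eqn:dif2}. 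Since the common factor $P_n$ cancels in the ratio, $A_n^{\ast}/B_n^{\ast}=\tilde A_n/\tilde B_n$ for all $n$, so the two continued fractions share the same limit.

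I expect no serious obstacle here; the only care needed is the parity bookkeeping in the first induction, namely ensuring that the numerator picks up $(-1)^{n+1}$ while the denominator picks up only $(-1)^{n}$, which is precisely what yields the overall sign flip rather than no change. Implicitly I assume the original continued fraction converges, so that the limits on both sides exist; the inductions then transfer convergence automatically to the transformed fraction.
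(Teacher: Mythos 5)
Your proposal is correct, but it takes a genuinely different route from the paper. The paper argues ``from the outside in'': for the first identity it writes $-\bigl(b_0+\frac{a_1}{b_1+\cdots}\bigr)=-b_0+\frac{a_1}{-(b_1+\cdots)}$ and for the second it pushes the sign one level down via $b_0+\frac{a_1}{b_1+\cdots}=b_0+\frac{-a_1}{-b_1-\frac{a_2}{b_2+\cdots}}$, then invokes induction on the nesting depth; this is short but leaves the passage from finite truncations to the infinite fraction implicit. You instead work at the level of the convergents: for the first identity you prove $A_n=(-1)^{n+1}\tilde A_n$, $B_n=(-1)^{n}\tilde B_n$ by induction on the recurrences \eqref{eqn:dif1}--\eqref{eqn:dif2} (the parity bookkeeping checks out, including the base cases $n=-1,0$), and for the second you apply the classical equivalence transformation with $c_n=(-1)^n$, giving $A_n^{\ast}=P_n\tilde A_n$, $B_n^{\ast}=P_n\tilde B_n$. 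What your approach buys is precision: you obtain equality of the $n$th convergents for every $n$ (up to the intended sign), so the transfer of convergence and of the limit is automatic and explicit, whereas the paper's nested-fraction induction must be unwound into statements about truncations to be fully rigorous. What the paper's approach buys is brevity and the transparency of seeing exactly where each sign goes. Your closing caveat is the right one and applies equally to both arguments: the theorem as stated presupposes that the original fraction converges.
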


As a corollary, we prove another conjecture from  \url{www.RamanujanMachine.com}
\begin{corollary} We have
$$-e=-3+\frac{-1}{-4}\cplus\frac{-2}{-5}\cplus\frac{-3}{-6}\cplus \dots.$$
\end{corollary}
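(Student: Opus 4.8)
The plan is to deduce the corollary from the first identity in Theorem~\ref{thm:mp} applied to a continued fraction for $e$ all of whose partial denominators are positive. Concretely, I would first establish the auxiliary expansion
$$e=3+\frac{-1}{4}\cplus\frac{-2}{5}\cplus\frac{-3}{6}\cplus\cdots,$$
namely the continued fraction \eqref{eqn:cf} with $b_0=3$, $a_n=-n$, and $b_n=n+3$ for $n\ge 1$. Granting this, the corollary follows at once: since negating a continued fraction negates $b_0$ and every partial denominator $b_n$ while leaving each $a_n$ fixed, the first identity of Theorem~\ref{thm:mp} with these values sends its left-hand side to $-e$ and its right-hand side to $-3+\frac{-1}{-4}\cplus\frac{-2}{-5}\cplus\frac{-3}{-6}\cplus\cdots$, which is precisely the asserted expansion. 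Hence all of the work sits in the auxiliary expansion, and the final step is pure sign bookkeeping.

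To prove the auxiliary expansion I would use the same guess-and-verify scheme employed for Theorems~\ref{thm1} and~\ref{thm:em2}. Denote by $A_n,B_n$ the numerators and denominators of its convergents, governed by \eqref{eqn:dif1}--\eqref{eqn:dif2} and \eqref{eqn:ic}. Computing the first few denominators, $B_0,\dots,B_4=1,4,18,96,600$, suggests the closed form $B_n=(n+1)\,(n+1)!$, and computing the numerators suggests
$$A_n=(n+1)\,(n+1)!\sum_{k=0}^{n+1}\frac{1}{k!}+1,$$
both of which are confirmed by a routine induction on the recurrences $B_n=(n+3)B_{n-1}-nB_{n-2}$ and $A_n=(n+3)A_{n-1}-nA_{n-2}$ coming from \eqref{eqn:dif1}--\eqref{eqn:dif2}. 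Alternatively, in the spirit of the series method developed later in the paper, the auxiliary expansion may be produced directly from $e=\sum_{n\ge0}1/n!$.

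With these closed forms the limit is transparent: the $n$th convergent equals
$$\frac{A_n}{B_n}=\sum_{k=0}^{n+1}\frac{1}{k!}+\frac{1}{(n+1)\,(n+1)!},$$
so as $n\to\infty$ the partial sum tends to $e$ while the correction term tends to $0$, giving $\lim_n A_n/B_n=e$ and establishing the auxiliary expansion. I expect the only genuine difficulty to be this first step — correctly guessing the factorial closed forms for $A_n$ and $B_n$ from a handful of terms; once they are in hand, both the induction and the limit are mechanical, and the corollary drops out of Theorem~\ref{thm:mp} exactly as described above.
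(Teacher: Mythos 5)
Your proposal is correct and follows essentially the same route as the paper: the paper's one-line proof applies the first identity of Theorem~\ref{thm:mp} to the known expansion $e=3+\frac{-1}{4}\cplus\frac{-2}{5}\cplus\frac{-3}{6}\cplus\cdots$ (cited there as the second formula of the reference by Lu et al.), exactly the sign-flip you describe. The only difference is that you prove that auxiliary expansion from scratch via the closed forms $B_n=(n+1)(n+1)!$ and $A_n=(n+1)(n+1)!\sum_{k=0}^{n+1}1/k!+1$ (both of which check out) rather than citing it, making your argument self-contained.
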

Theorem~\ref{thm:mp} applied to the second formula in \cite{lu2019elementary} immediately gives the above corollary. Using Theorem~\ref{thm1}, Theorem~\ref{thm:mp}, and a trivial manipulation gives proof for another conjecture in \url{www.RamanujanMachine.com}.

\begin{corollary}\label{cor:1} We have
$$-\frac{e}2=-1+\frac{2}{-2}\cplus\frac{3}{-3}\cplus \frac{4}{-4} \cplus \cdots.$$
\end{corollary}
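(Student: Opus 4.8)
The plan is to read off the corollary directly from Theorem~\ref{thm1} by negating its expansion through the first identity of Theorem~\ref{thm:mp}, and then to put the leading term into the displayed normal form by an elementary equivalence manipulation of continued fractions. No new series or induction should be needed: everything is meant to be a formal rewriting of the already-proved expansion for $e/2$, so the work is entirely in tracking signs and the first partial quotient.

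Concretely, I would first record Theorem~\ref{thm1} in the notation of \eqref{eqn:cf}, with $b_0=1$, $(a_1,b_1)=(1,2)$, and $(a_n,b_n)=(n+1,n+1)$ for $n\ge 2$. Applying the first identity of Theorem~\ref{thm:mp} negates $b_0$ and every partial denominator $b_n$ while leaving each partial numerator $a_n$ fixed, so it converts the expansion of $\frac e2$ into one for $-\frac e2$ with denominators $-1,-2,-3,-4,\dots$ and numerators $1,3,4,5,\dots$. The final, ``trivial'' step is then to reshape the head $-1+\frac{\,\cdot\,}{-2}\cplus\frac{3}{-3}\cplus\frac{4}{-4}\cplus\cdots$ into the stated block $-1+\frac{2}{-2}\cplus\frac{3}{-3}\cplus\frac{4}{-4}\cplus\cdots$ using the standard equivalence transformation (multiply the $n$-th numerator and denominator by a common nonzero factor $r_n$ and compensate in the adjacent numerator), with the factors chosen so that every denominator from the second onward, as well as the whole tail $\frac{3}{-3}\cplus\frac{4}{-4}\cplus\cdots$, is left untouched.

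Convergence of the rewritten fraction is then automatic: it is inherited from Theorem~\ref{thm1} together with the elementary facts that negation (Theorem~\ref{thm:mp}) and an equivalence transformation each send a convergent continued fraction to a convergent one with the correspondingly negated or unchanged limit. The delicate point, and the step I would verify most carefully, is precisely this leading adjustment: the equivalence transformation links the first numerator to the first denominator, so matching the displayed numerator $2$ against the displayed denominator $-2$ while holding the tail $\frac{3}{-3}\cplus\frac{4}{-4}\cplus\cdots$ fixed is the only nonroutine bookkeeping in the argument. I would therefore pin down the compensating factors explicitly and confirm that the limit produced by the reshaped head is exactly $-\frac e2$, since it is here — and only here — that a mismatched normalization of the first partial quotient could shift the stated value.
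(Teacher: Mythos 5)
Your first two steps are exactly the paper's intended route (the paper's entire ``proof'' of this corollary is the one-line claim that Theorem~\ref{thm1}, Theorem~\ref{thm:mp}, and ``a trivial manipulation'' suffice), and they are fine: negating Theorem~\ref{thm1} via the first identity of Theorem~\ref{thm:mp} gives
$$-\frac e2=-1+\frac{1}{-2}\cplus\frac{3}{-3}\cplus\frac{4}{-4}\cplus\cdots,$$
with first partial numerator $1$. The gap is your third step: no equivalence transformation can turn this head into the displayed $\frac{2}{-2}$ while, as you require, leaving the denominator $-2$, the numerator $3$, and the whole tail untouched. An equivalence transformation replaces $a_n,b_n$ by $r_{n-1}r_na_n,\,r_nb_n$ (with $r_0=1$, $r_n\ne 0$); freezing $b_1=-2$ forces $r_1=1$, and freezing $a_2=3$ likewise forces $r_1=1$, so $a_1$ cannot move from $1$ to $2$. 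There is no slack left in the bookkeeping you propose.

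Worse, the verification you deferred to the end is fatal: the identity as displayed is false, so no bookkeeping could rescue it. Writing $s=\frac{3}{3}\cplus\frac{4}{4}\cplus\frac{5}{5}\cplus\cdots$, Theorem~\ref{thm1} gives $2+s=\frac{2}{e-2}$, and the tail $\frac{3}{-3}\cplus\frac{4}{-4}\cplus\cdots$ equals $-s$ by Theorem~\ref{thm:mp}, hence
$$-1+\frac{2}{-2}\cplus\frac{3}{-3}\cplus\frac{4}{-4}\cplus\cdots=-1-\frac{2}{2+s}=-1-(e-2)=1-e\approx-1.71828\ \ne\ -\frac e2\approx-1.35914,$$
as the convergents $-2,\,-5/3,\,-19/11,\,-91/53,\,-531/309,\dots$ confirm. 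So you correctly isolated the one delicate point --- the normalization of the first partial quotient --- but carrying out the check you promised shows the step fails: doubling $a_1$ while holding $b_1$ and the tail fixed shifts the value from $-e/2$ to $1-e$. In fact the corollary as printed is itself misstated: with first numerator $1$ it is the immediate negation of Theorem~\ref{thm1} (no further manipulation needed), while with the printed numerator $2$ the left-hand side should be $1-e$, the displayed fraction being the negation of the classical $e-1=1+\frac{2}{2}\cplus\frac{3}{3}\cplus\frac{4}{4}\cplus\cdots$. Your proposal faithfully reconstructs the paper's sketch, but the ``trivial manipulation'' both you and the paper invoke does not exist.
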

We note that using similar arguments as in the proof of Theorem~\ref{thm1}, Zhaniya Zhanabay provided another proof for Corollary~\ref{cor:1}.  Theorem~\ref{thm:em2} together with Theorem~\ref{thm:mp} immediately give

\begin{corollary} We have 
\begin{align*}
    2-e&=-1+\frac{-1}{-1}\cplus \frac{2}{-1} \cplus \frac{-1}{-1} \cplus\frac{3}{-1} \cplus \frac{-1}{-1}\cplus \frac{4}{-1} \cplus\cdots\\
    e-2&=1+\frac{1}{-1}\cplus \frac{-2}{1} \cplus \frac{1}{-1} \cplus\frac{-3}{1} \cplus \frac{1}{-1}\cplus \frac{-4}{1} \cplus\cdots
\end{align*}
\end{corollary}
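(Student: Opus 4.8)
The plan is to read off both displayed expansions directly from Theorem~\ref{thm:em2} by substituting its coefficients into the two sign-change identities of Theorem~\ref{thm:mp}; no fresh induction is required. Recall that Theorem~\ref{thm:em2} supplies the data $b_0 = 1$ together with the numerator pattern $a_1 = -1,\ a_2 = 2,\ a_3 = -1,\ a_4 = 3,\ a_5 = -1,\ a_6 = 4,\ \ldots$ and $b_n = 1$ for every $n \ge 1$. Everything then reduces to tracking how these coefficients transform under each identity.

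First I would treat the expression for $2-e$. Since $2-e = -(e-2)$, I apply the first identity of Theorem~\ref{thm:mp} to the expansion of $e-2$. That identity negates $b_0$ and every denominator $b_n$ while leaving each numerator $a_n$ fixed. Hence $b_0 = 1$ becomes $-1$, each $b_n = 1$ becomes $-1$, and the numerators $-1, 2, -1, 3, \ldots$ are unchanged, which is precisely the first displayed line.

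Next I would obtain the alternative expansion for $e-2$ by applying the second identity of Theorem~\ref{thm:mp}, the one that preserves the value of the continued fraction. This identity negates every numerator $a_n$ and negates the odd-indexed denominators $b_1, b_3, b_5, \ldots$, while keeping $b_0$ and the even-indexed denominators $b_2, b_4, \ldots$ intact. Feeding in the data of Theorem~\ref{thm:em2}, the numerators $-1, 2, -1, 3, -1, 4, \ldots$ become $1, -2, 1, -3, 1, -4, \ldots$, the odd-indexed $b_n = 1$ become $-1$, and the even-indexed $b_n = 1$ stay equal to $1$; the value remains $e-2$. This reproduces the second displayed line.

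The only point demanding care, and it is bookkeeping rather than a genuine obstacle, is the alternating parity in the second identity: one must align the indices so that the negated denominators land exactly on the odd positions and the numerators $-1$ sitting at those positions flip to $+1$. Because Theorem~\ref{thm:mp} is already established, both lines then follow by inspection once the substitution is performed.
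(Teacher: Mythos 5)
Your proposal is correct and matches the paper's argument exactly: the paper states that the corollary follows immediately from Theorem~\ref{thm:em2} combined with the two identities of Theorem~\ref{thm:mp}, which is precisely the substitution you carry out. Both sign-pattern computations check out against the displayed expansions.
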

Both statements were conjectured in \url{www.RamanujanMachine.com}.

\begin{corollary} \label{thm2}
The following conjectures are equivalent:
\begin{align*}
    -\frac12(\pi+2)&=-3\cdot 1+\frac{-2\cdot 1}{-3\cdot 2}\cplus \frac{-3\cdot 3 }{-3\cdot 3} \cplus \frac{-4\cdot 5}{-3\cdot 4} \cplus\frac{-5\cdot 7}{-3\cdot 5} \cplus\cdots\\
     \frac12(\pi+2)&=3\cdot 1+\frac{-2\cdot 1}{3\cdot 2}\cplus \frac{3\cdot 3 }{3\cdot 3} \cplus \frac{-4\cdot 5}{3\cdot 4} \cplus\frac{-5\cdot 7}{3\cdot 5} \cplus\cdots\\
      -\frac12(\pi+2)&=-3\cdot 1+\frac{2\cdot 1}{3\cdot 2}\cplus \frac{3\cdot 3 }{-3\cdot 3} \cplus \frac{4\cdot 5}{3\cdot 4} \cplus\frac{5\cdot 7}{-3\cdot 5} \cplus\cdots\\
      \frac12(\pi+2)&=3\cdot 1+\frac{2\cdot 1}{-3\cdot 2}\cplus \frac{-3\cdot 3 }{3\cdot 3} \cplus \frac{4\cdot 5}{-3\cdot 4} \cplus\frac{5\cdot 7}{3\cdot 5} \cplus\cdots
\end{align*}
\end{corollary}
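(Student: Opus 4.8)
The plan is to treat this purely as an \emph{equivalence} statement: nothing here asserts a new value, so no convergence analysis or evaluation of $\pi$ is needed, and the four conjectures will stand or fall together. Each of the four displayed expansions will be obtained from the others by the two algebraic identities of Theorem~\ref{thm:mp}, applied term by term. First I would read off the partial numerators and denominators of each continued fraction, factoring out the common magnitudes $\alpha_n := (n+1)(2n-1)$ (so $\alpha_1 = 2\cdot 1$, $\alpha_2 = 3\cdot 3$, $\alpha_3 = 4\cdot 5$, \dots) and $\beta_n := 3(n+1)$ (with $\beta_0 = 3$), so that each expansion is recorded simply as a sign pattern attached to the sequences $(\alpha_n)$ and $(\beta_n)$.

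With this bookkeeping in place, the two identities act as follows. The first identity (the overall-negation move) fixes every $a_n$, negates $b_0$ and every $b_n$, and flips the sign of the value; this is exactly the passage between the first and second expansions, and likewise between the third and fourth. The second identity (the equivalence move with constants $(-1)^n$) fixes $b_0$ and the value, negates every $a_n$, and multiplies each $b_n$ by $(-1)^n$; this produces the alternating-sign denominators and is exactly the passage between the first and third expansions, and between the second and fourth. I would therefore arrange the four expansions as the corners of a square, with the first identity along one pair of opposite edges and the second identity along the other pair, and check that traversing the edges carries the first expansion to the second, third, and fourth in turn. Since each edge is an equality supplied by Theorem~\ref{thm:mp}, the four expansions are mutually equivalent, and the square commutes because the two moves act on disjoint pieces of the data (one on the value and all $b_n$, the other on all $a_n$ and the parity-weighted $b_n$).

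The only real work, and the place where the argument can go wrong, is the sign bookkeeping. I would verify term by term that the factor $(-1)^n$ coming from the second identity reproduces the alternating-sign denominators of the third and fourth expansions (the phase $(-1)^n$ versus $(-1)^{n+1}$ being fixed by whichever expansion one starts from), and that $b_0$ behaves correctly under each move, namely unchanged under the equivalence identity and negated under the negation identity. After checking the first few indices explicitly against the displayed coefficients, the general term follows at once from the closed forms $a_n = \pm\alpha_n$ and $b_n = \pm\beta_n$, so no induction beyond matching formulas is required. The main obstacle is thus clerical rather than conceptual: keeping the four interlocking sign patterns straight while confirming that each displayed numerator and denominator is precisely the one produced by the relevant identity in Theorem~\ref{thm:mp}.
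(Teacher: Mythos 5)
Your proposal is correct and is essentially the paper's own argument: the paper simply declares the four expansions equivalent ``from Theorem~\ref{thm:mp}'', and your pairing --- the negation identity linking the first display with the second and the third with the fourth, and the $(-1)^n$-equivalence identity linking the first with the third and the second with the fourth --- is exactly the intended application, with no convergence analysis needed. One caveat that your promised term-by-term check would actually uncover: as printed, the $a_2$ entries of the second and fourth displays are $+3\cdot 3$ and $-3\cdot 3$ respectively, whereas the negation identity (which leaves every $a_n$ unchanged) applied to the first and third displays would produce $-3\cdot 3$ and $+3\cdot 3$; this is evidently a sign typo in the statement (the identities yield $a_n=-(n+1)(2n-1)$ throughout the first two lines and $a_n=+(n+1)(2n-1)$ throughout the last two, except for $a_1$), not a flaw in your method.
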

The first two conjectures appear in \url{www.RamanujanMachine.com} and the remaining two are new. The equivalence of the four conjectures are trivial from Theorem~\ref{thm:mp}. Attempt to prove the conjectures related to continued fractions for $\pi$ using similar ideas as in the above theorems are not yet successful.  

For the second objective we obtain the following.

\begin{theorem}\label{thm:tw} Let $\sum_{k=1}^\infty c_k$ be a convergent series with nonzero terms. Define $b_1=2$, $a_1=2c_1$, and
$$b_n=\frac{2(c_n+c_{n-1})}{c_{n-1}} \text{ and } a_n=\frac{-4c_n}{c_{n-1}}, \text{ for }n \ge 2.$$
Then, we have 
$$\sum_{k=1}^\infty c_k=\frac{a_1}{b_1}\cplus \frac{a_2}{b_2} \cplus \frac{a_3}{b_3} \cplus\frac{a_4}{b_4} \cplus\cdots.$$
\end{theorem}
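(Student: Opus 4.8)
The plan is to show directly that the $n$-th convergent $A_n/B_n$ of the displayed continued fraction coincides with the $n$-th partial sum $S_n := \sum_{k=1}^n c_k$; the conclusion then follows immediately from convergence of the series. Since the continued fraction carries no leading term, I take $b_0=0$ in the initial conditions \eqref{eqn:ic}, so that $A_{-1}=1$, $A_0=0$, $B_{-1}=0$, $B_0=1$. Computing $A_1/B_1$, $A_2/B_2$, $A_3/B_3$ by hand from the recurrences \eqref{eqn:dif1}--\eqref{eqn:dif2} together with the given $a_n,b_n$, I find in each case that the denominator doubles while the numerator is exactly $2^n$ times the corresponding partial sum. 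This suggests the closed forms
$$B_n = 2^n \quad\text{and}\quad A_n = 2^n S_n \qquad (n \ge 1),$$
which I would then establish by induction.

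The base cases $n=1$ and $n=2$ are direct substitutions into \eqref{eqn:dif1}--\eqref{eqn:dif2}; note that $n=1$ uses the special values $a_1=2c_1$, $b_1=2$ whereas $n\ge 2$ uses the general formulas, so both should be checked separately. For the inductive step with $n\ge 3$, I substitute the hypotheses $B_{n-1}=2^{n-1}$ and $B_{n-2}=2^{n-2}$ into \eqref{eqn:dif2} with $b_n=2(c_n+c_{n-1})/c_{n-1}$ and $a_n=-4c_n/c_{n-1}$. The two terms then combine so that the $c_n$ contributions cancel, leaving $2^n c_{n-1}/c_{n-1}=2^n$; this is exactly where the hypothesis that the $c_k$ are nonzero is needed, since it legitimizes dividing by $c_{n-1}$.

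The numerator computation is the crux. Substituting $A_{n-1}=2^{n-1}S_{n-1}$ and $A_{n-2}=2^{n-2}S_{n-2}$ into \eqref{eqn:dif1} and factoring out $2^n/c_{n-1}$ leaves the bracket $(c_n+c_{n-1})S_{n-1}-c_nS_{n-2}$. The key algebraic identity is that this simplifies to $c_{n-1}S_n$: using $S_{n-1}-S_{n-2}=c_{n-1}$ and $S_n=S_{n-1}+c_n$, one checks
$$(c_n+c_{n-1})S_{n-1}-c_nS_{n-2} = c_n(S_{n-1}-S_{n-2})+c_{n-1}S_{n-1} = c_{n-1}(c_n+S_{n-1}) = c_{n-1}S_n,$$
so that $A_n=(2^n/c_{n-1})\cdot c_{n-1}S_n = 2^n S_n$, completing the induction. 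I expect this telescoping-style simplification to be the only nonroutine point of the argument.

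With the closed forms in hand, $A_n/B_n = 2^n S_n/2^n = S_n$ for every $n\ge 1$, and hence $\lim_{n\to\infty} A_n/B_n = \lim_{n\to\infty} S_n = \sum_{k=1}^\infty c_k$ by the assumed convergence of the series. This is precisely the claimed continued fraction expansion, so the theorem follows.
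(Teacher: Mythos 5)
Your proof is correct, and the computations check out: the base cases $n=1,2$ give $(A_1,B_1)=(2c_1,2)$ and $(A_2,B_2)=(4(c_1+c_2),4)$, and the identity $(c_n+c_{n-1})S_{n-1}-c_nS_{n-2}=c_{n-1}S_n$ does close the induction, so $A_n/B_n=S_n\to\sum_k c_k$. Your route is, however, the converse of the paper's. The paper first proves a general inversion result (Theorem~\ref{thm:anbn}): given target sequences $(A_n),(B_n)$ with $A_n/B_n\to x$, the correct initial conditions, and $A_{n-1}B_{n-2}-A_{n-2}B_{n-1}\ne 0$, one solves the $2\times 2$ linear system coming from \eqref{eqn:dif1}--\eqref{eqn:dif2} to \emph{derive} $a_n$ and $b_n$. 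It then plugs in $A_n=2^nS_n$, $B_n=2^n$ and checks that the resulting $a_n,b_n$ are exactly those in the statement (treating $n=1$ separately because $B_{-1}=0$ makes the quotient form of the formula unusable there). You instead take the stated $a_n,b_n$ as given and verify by induction that they reproduce $A_n=2^nS_n$, $B_n=2^n$. Both arguments hinge on the same closed forms, but yours is self-contained and more elementary --- it needs no determinant nonvanishing hypothesis beyond the division by $c_{n-1}$ that the formulas already presuppose --- whereas the paper's is designed to exhibit the general machine (``pick $A_n,B_n$, then read off $a_n,b_n$'') that it reuses for Corollary~\ref{cor:cfpi} and advertises as the method for generating new expansions. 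In short: your proof verifies the theorem more directly; the paper's proof explains where the theorem came from.
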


If we one considers the well-known series representation for $\pi/4$, namely, $\pi/4=\sum_{k=1}^\infty (-1)^{k+1}/(2k-1)$, then Theorem~\ref{thm:tw} gives 
$$\frac{\pi}4=\frac{1}{1}\cplus \frac{1^2}{2} \cplus \frac{3^2}{2} \cplus\frac{5^2}{2} \cplus\frac{7^2}{2} \cplus\cdots,$$
which is well-known (goes back to Lord Brouncker, 17th century), except the first three terms needs to be simplified.

For $(\pi-3)/4=\sum_{k=1}^\infty (-1)^{k+1}/[2k(2k+1)(2k+2)]$, another series representation of $\pi$, we obtain
$$\frac{\pi-3}4=\frac{1}{24}\cplus \frac{36}{6} \cplus \frac{5^2}{6} \cplus\frac{7^2}{6} \cplus\frac{9^2}{6} \cplus\cdots,$$
yet another well-known representation \cite{lange1999elegant}. We have the following new continued fraction for $\pi/3$:

\begin{corollary}\label{cor:cfpi} We have
$$\frac{\pi}3=\frac{-6}{-5}\cplus \frac{-75}{63} \cplus \frac{29645}{278}  \cplus \frac{a_4}{b_4}\cplus \frac{a_5}{b_5} \cplus\cdots,$$
where $a_n,b_n$ are given by
$$b_n=2(36n^2-72n+31) \text{ and } a_n=(2n-1)(2n-5)(6n-11)^2(6n-7)^2,$$
for any $n \ge 2$.
\end{corollary}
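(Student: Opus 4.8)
The plan is to realize the corollary as a direct instance of Theorem~\ref{thm:tw} followed by a rescaling (equivalence transformation) of the resulting continued fraction. First I would produce a convergent series with sum $\pi/3$ whose consecutive-term ratio involves exactly the linear factors $2k-1$, $6k-5$, $6k-1$ that appear in the claimed coefficients. The natural candidate is
$$\frac{\pi}{3}=\sum_{k=1}^\infty c_k,\qquad c_k=\frac{(-1)^{k-1}\,6(2k-1)}{(6k-5)(6k-1)}.$$
To verify the sum I would expand $\frac{1+x^4}{1+x^6}=\sum_{k\ge 0}(-1)^k\bigl(x^{6k}+x^{6k+4}\bigr)$ and integrate over $[0,1]$. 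Using $1+x^6=(1+x^2)(x^4-x^2+1)$ one rewrites $\frac{1+x^4}{1+x^6}=\frac{1}{1+x^2}+\frac{x^2}{1+x^6}$, so the integral equals $\frac{\pi}{4}+\frac{\pi}{12}=\frac{\pi}{3}$, while termwise integration gives $\sum_{k\ge1}(-1)^{k-1}\bigl(\frac{1}{6k-5}+\frac{1}{6k-1}\bigr)=\sum_k c_k$ after combining the two fractions. The terms are nonzero and the series converges (it is alternating with $|c_k|\sim 1/(3k)$), so Theorem~\ref{thm:tw} applies.

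Next I would feed this series into Theorem~\ref{thm:tw}. Setting $r_n=c_n/c_{n-1}=-\frac{(2n-1)(6n-11)(6n-7)}{(2n-3)(6n-5)(6n-1)}$, the theorem outputs the rational coefficients $a_n=-4r_n$ and $b_n=2(1+r_n)$ for $n\ge 2$, together with the initial data $a_1=2c_1=\tfrac{12}{5}$, $b_1=2$. The single computation that drives the whole argument is the polynomial identity
$$(2n-3)(6n-5)(6n-1)-(2n-1)(6n-11)(6n-7)=2\,(36n^2-72n+31),$$
from which $b_n=\dfrac{4(36n^2-72n+31)}{(2n-3)(6n-5)(6n-1)}$ and $a_n=\dfrac{4(2n-1)(6n-11)(6n-7)}{(2n-3)(6n-5)(6n-1)}$.

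Then I would clear these denominators by an equivalence transformation: replacing $a_n\mapsto\lambda_{n-1}\lambda_n a_n$ and $b_n\mapsto\lambda_n b_n$ (with $\lambda_0=1$) leaves every convergent unchanged, which follows from \eqref{eqn:dif1}--\eqref{eqn:dif2} by the easy induction that $A_n$ and $B_n$ both get multiplied by $\prod_{j\le n}\lambda_j$. Choosing $\lambda_n=\tfrac12(2n-3)(6n-5)(6n-1)$ forces the cancellations: one checks $\lambda_n b_n=2(36n^2-72n+31)$ and, after $(2n-3)(6n-5)(6n-1)$ telescopes against the neighbouring $\lambda$'s, $\lambda_{n-1}\lambda_n a_n=(2n-1)(2n-5)(6n-11)^2(6n-7)^2$ for $n\ge 2$, which is exactly the stated pair $a_n,b_n$. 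For the first term the initial rule $a_1=2c_1$ is used instead of the ratio formula, and with $\lambda_1=-\tfrac52$ one gets $(\lambda_1 b_1,\lambda_0\lambda_1 a_1)=(-5,-6)$; this is precisely why the term $\frac{-6}{-5}$ is displayed separately from the general formula.

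The main obstacle is not conceptual but bookkeeping: discovering the correct series $c_k$ and the correct scaling sequence $\lambda_n$ so that the unwieldy rational $a_n,b_n$ collapse onto the advertised polynomials, and then carrying out the two polynomial identities and the telescoping cancellation without a sign error. Everything else---convergence, the evaluation of $\sum c_k$, and the invariance of the convergents under the rescaling---is routine, and Theorem~\ref{thm:mp} is available if one later wishes to recast the result in an all-positive or alternative sign pattern. I also note that evaluating the stated denominator at $n=2$ gives $2(36\cdot 4-72\cdot 2+31)=62$, so the displayed second partial denominator $63$ seems to be a typographical slip for $62$.
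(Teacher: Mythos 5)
Your proposal follows essentially the same route as the paper: the same series $\frac{\pi}{3}=\sum_{k\ge1}(-1)^{k+1}\left(\frac{1}{6k-5}+\frac{1}{6k-1}\right)$, an application of Theorem~\ref{thm:tw}, and then an equivalence transformation to clear denominators; you are merely more explicit about verifying the series (the integral of $(1+x^4)/(1+x^6)$), the polynomial identity behind $36n^2-72n+31$, and the scaling sequence $\lambda_n$. Your side remarks are also correct: the displayed $63$ should indeed be $62$ (the general formula gives $2(36\cdot4-72\cdot2+31)=62$, and only then do the convergents match the partial sums of the series), and the paper's intermediate expression for $a_n$ is missing the factor of $4$ that your computation supplies.
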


In the next section we prove Theorem~\ref{thm1}. Then, in \S~\ref{sec:tw} we introduce the twisted method to prove Theorem~\ref{thm2}.

\section{Proofs of conjectures}\label{sec:cj}

\begin{proof}[Proof of Theorem~\ref{thm1}]
We note that $b_n=n+1$ for $n \ge 0$ and $a_1=1$, $a_n=n+1$ for $n\ge 1$. Substituting into \eqref{eqn:dif1} and \eqref{eqn:dif2} we see that 
$(A_n)_{n\ge-1}=(1,   1,    3,  12,   60, 360, 2520, \dots)$ and $(B_n)_{\ge -1}=(0,   1,  2,   9, 44, 265,1854, \dots)$. From \cite{sloane2003}, \url{https://oeis.org/} we see that $A_n$ is eventually the sequence $A001710$, the number of even permutations of $n$ letters, which is given by $n!/2$. So, we derive that $A_n= (n+2)!/2$ for $n\ge 0.$ Similarly, we see that $B_n$ is the shifted sequence $A182386$ from \cite{sloane2003}. This gives $B_n=  (n+2)!\sum_{k=0}^{n+2}(-1)^k/k!$ for $n \ge 1$. It is straightforward to verify bu induction these formulas, then
$$\lim_{n \to \infty}\frac{A_n}{B_n}=\lim_{n \to \infty}\frac{1/2}{\sum_{k=0}^{n+2}(-1)^k/k!}=\frac{2^{-1}}{e^{-1}}=e/2.$$
\end{proof}

\begin{proof}[Proof of Theorem~\ref{thm:mp}] 
Note that 
\begin{align*}
    -\left(b_0+\frac{a_1}{b_1}\cplus \frac{a_2}{b_2} \cplus \frac{a_3}{b_3} \cplus\cdots\right)&=-b_0+\frac{-a_1}{b_1}\cplus \frac{a_2}{b_2} \cplus \frac{a_3}{b_3} \cplus\cdots\\
    &=-b_0+\frac{a_1}{- \left(b_1+ \frac{a_2}{b_2} \cplus \frac{a_3}{b_3} \cplus\cdots\right)}.
\end{align*}
Hence, the assertion follows by induction. For the second claim, we notice that
$$b_0+\frac{a_1}{b_1}\cplus \frac{a_2}{b_2} \cplus \frac{a_3}{b_3} \cplus\frac{a_4}{b_4} \cplus\cdots=b_0+\frac{-a_1}{-b_1+ \left(\frac{-a_2}{b_2} \cplus \frac{a_3}{b_3} \cplus\frac{a_4}{b_4} \cplus\cdots\right)}.$$
Again inductively we deduce the claim.
\end{proof}

\begin{proof}[Proof of Theorem~\ref{thm:em2}]
For $b_n=1, \ge 0$ and $(a_n)_{\ge 1}=(-1,2,-1,3,-1,4,-1,\dots)$ we get
$$(A_n)_{\ge -1}=(  1,   1,   0,   2,   2,   8,   6,  38,  32, 222, 190,\cdots),$$ $$(B_n)_{\ge -1}=(  0,   1,   1,   3,   2,  11,   9,  53,  44, 309, 265, \dots).$$
These sequences are not recognized in \cite{sloane2003}. However, if we consider subsequences with odd and even indices separately then we see for $n \ge 1$ that
\begin{align*}
    &A_{2n-1}=-n!\left(1+2\sum_{k=1}^n(-1)^k\frac1k!\right)=-n!\left(2\sum_{k=0}^n(-1)^k\frac1k!-1\right)\\
    &A_{2n}=(n+1)!+(n+2)! -2( A000166(n+1) + A000166(n+2))\\
    &B_{2n-1}=n!\sum_{k=0}^n(-1)^k\frac1k!\\
    &B_{2n} =A000166(n+1) + A000166(n+2)
\end{align*}
where $A000166(n)=n!\sum_{k=0}^n(-1)^k\frac{1}{k!}.$
We see that 
$$\lim_{n \to \infty} \frac{A_{2n-1}}{B_{2n-1}}=-2+\lim_{n \to \infty}\frac{1}{\sum_{k=0}^n(-1)^k\frac1k!}=e-2.$$ Similarly,
\begin{align*}
\lim_{n \to \infty}\frac{A_{2n}}{B_{2n}}&=\lim_{n \to \infty}\frac{(n+1)!+(n+2)!}{(n+1)!\sum_{k=0}^{n+1}(-1)^k\frac{1}{k!} + (n+2)!\sum_{k=0}^{n+2}(-1)^k\frac{1}{k!}}-2\\
&=\lim_{n \to \infty}\frac{(n+1)!+(n+2)!}{[(n+1)!+(n+2)!]\sum_{k=0}^{n+1}(-1)^k\frac{1}{k!} + (-1)^{n+2} /(n+2)!}-2\\
&=\frac{1}{\sum_{k=0}^{\infty}(-1)^k\frac{1}{k!} }-2=e-2.
\end{align*}

\end{proof}

\section{Generating new continued fractions}\label{sec:tw}

In this section we explain a general method to obtain various generalized continued fraction formulas for a given number. It is based on the following very simple idea:
\begin{enumerate}
    \item For a given number $x$, find sequences $A_n, B_n$ such that $\lim_{n \to \infty} \frac{A_n}{B_n}=x$
    \item Find $a_n,b_n$ satisfying the difference equations \eqref{eqn:dif1} and \eqref{eqn:dif2} with initial conditions \eqref{eqn:ic}.
\end{enumerate}
Then, obviously \eqref{eqn:cf} converges to $x$. To this end, we need to find formulas for $a_n,b_n$ in terms of $A_n, B_n$ and pay special attention to initial conditions as we study now. We may write \eqref{eqn:dif1} and \eqref{eqn:dif2} as a matrix equation:
\begin{equation}
\begin{bmatrix}
A_{n-1} & A_{n-2}\\
B_{n-1} & B_{n-2}
\end{bmatrix}
\begin{bmatrix}
b_n\\
a_n
\end{bmatrix}
=\begin{bmatrix}
A_n\\
B_n
\end{bmatrix}
\end{equation}
Assuming invertibility ($A_{n-1}B_{n-2}-A_{n-2}B_{n-1}\ne 0)
$ we obtain

\begin{equation*}
\begin{bmatrix}
b_n\\
a_n
\end{bmatrix}=
\frac{1}{A_{n-1}B_{n-2}-A_{n-2}B_{n-1}}\begin{bmatrix}
B_{n-2} & -A_{n-2}\\
-B_{n-1} & A_{n-1}
\end{bmatrix}\begin{bmatrix}
A_n\\
B_n
\end{bmatrix},
\end{equation*}
which summarizing gives
\begin{theorem}\label{thm:anbn} Let $x$ be a number. Let $(A_n)_{\ge -1}$ and $(B_n)_{\ge -1}$ be two sequences of numbers satisfying
\begin{enumerate}
    \item $\lim_{n \to \infty} A_n/B_n=x$, 
    \item $A_{-1}=1,B_{-1}=0, B_0=1,$
    \item $A_{n-1}B_{n-2}-A_{n-2}B_{n-1}\ne 0$ for $n \ge 2$.
\end{enumerate}
Then, for the sequences $(a_n)_{\ge 1}$ and $(b_n)_{\ge 0}$ given by
\begin{equation} \label{eqn:anbn}
    b_n:=\frac{A_nB_{n-2}-A_{n-2}B_n}{A_{n-1}B_{n-2}-A_{n-2}B_{n-1}},
    a_n:=\frac{-A_nB_{n-1}+A_{n-1}B_n}{A_{n-1}B_{n-2}-A_{n-2}B_{n-1}} \text{ for } n \ge 1,
\end{equation}
and $b_0=A_0$ we have 
$$x=b_0+\frac{a_1}{b_1}\cplus \frac{a_2}{b_2} \cplus \frac{a_3}{b_3} \cplus\frac{a_4}{b_4} \cplus\cdots.$$
\end{theorem}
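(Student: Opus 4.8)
The plan is to show that the convergent numerators and denominators attached to the constructed pair $(a_n),(b_n)$ coincide with the prescribed sequences $(A_n),(B_n)$; once this identification is made, hypothesis (1) delivers convergence to $x$ at once. To keep the two roles distinct, let me write $(\tilde A_n)_{\ge -1}$ and $(\tilde B_n)_{\ge -1}$ for the sequences generated from the continued fraction $b_0+\frac{a_1}{b_1}\cplus\frac{a_2}{b_2}\cplus\cdots$ via the recurrences \eqref{eqn:dif1} and \eqref{eqn:dif2} under the standard initial conditions \eqref{eqn:ic}. The entire proof then reduces to establishing $\tilde A_n=A_n$ and $\tilde B_n=B_n$ for all $n\ge -1$.

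First I would record the algebraic identity that motivates the definition \eqref{eqn:anbn}. Substituting the stated formulas for $a_n$ and $b_n$ into $b_nA_{n-1}+a_nA_{n-2}$ and simplifying (the mixed terms $A_{n-1}A_{n-2}B_n$ cancel, and the common factor $A_{n-1}B_{n-2}-A_{n-2}B_{n-1}$ cancels against the denominator) yields $b_nA_{n-1}+a_nA_{n-2}=A_n$, and symmetrically $b_nB_{n-1}+a_nB_{n-2}=B_n$, for every $n\ge 1$. This is exactly Cramer's rule applied to the matrix equation displayed just before the theorem, so it is a brief verification rather than a real computation. At this point I would flag the one index that needs special attention: for $n=1$ the relevant determinant equals $A_0B_{-1}-A_{-1}B_0=-1$ by hypothesis (2), so $a_1$ and $b_1$ are well-defined even though the nonvanishing hypothesis (3) is only imposed for $n\ge 2$.

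With these identities available, the equalities $\tilde A_n=A_n$ and $\tilde B_n=B_n$ follow by a routine induction on $n$. The base cases $n=-1$ and $n=0$ are settled by hypothesis (2) together with the definition $b_0=A_0$, which reproduce precisely the initial data \eqref{eqn:ic}. For the inductive step, assuming the claim at $n-1$ and $n-2$, the recurrences give $\tilde A_n=b_n\tilde A_{n-1}+a_n\tilde A_{n-2}=b_nA_{n-1}+a_nA_{n-2}=A_n$, and likewise $\tilde B_n=B_n$, invoking the identity from the previous paragraph.

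Finally, since the $n$-th convergent of the continued fraction equals $\tilde A_n/\tilde B_n=A_n/B_n$, hypothesis (1) gives $\lim_{n\to\infty}\tilde A_n/\tilde B_n=x$, which is the asserted expansion. The only genuinely delicate point — as opposed to bookkeeping — is the interface at the small indices: one must check that the prescribed values $A_{-1},B_{-1},B_0$ are exactly those forced by \eqref{eqn:ic}, so that the induction can be seeded, and that the $n=1$ determinant is nonzero on its own rather than by hypothesis (3). Beyond this the argument is entirely driven by the determinant identity, so I do not anticipate any serious obstacle.
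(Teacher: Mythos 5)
Your proposal is correct and follows essentially the same route as the paper: the paper's entire justification is the matrix identity displayed just before the theorem, i.e.\ Cramer's rule showing that the defined $a_n,b_n$ make $(A_n),(B_n)$ satisfy the convergent recurrences \eqref{eqn:dif1}--\eqref{eqn:dif2} with the initial data \eqref{eqn:ic}, so the convergents are $A_n/B_n$ and hypothesis (1) finishes the argument. Your write-up is in fact slightly more careful than the paper's, notably in checking that the $n=1$ determinant equals $A_0B_{-1}-A_{-1}B_0=-1\ne 0$ independently of hypothesis (3).
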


We now apply Theorem~\ref{thm:anbn} to prove Theorem~\ref{thm:tw}. 

\begin{proof}[Proof of Theorem~\ref{thm:tw}]
Let $\sum_{k=1}^\infty c_k$ converge to some $x$. We let 
\begin{equation}\label{eqn:AB'}
A_n=2^n\sum_{k=1}^{n} c_k, B_n=2^n, n \ge 1 \text{ and }A_{-1}=1,A_0=0,B_{-1}=0, B_0=1.
\end{equation}
 Then, $\lim_{n \to \infty}A_n/B_n=x$ and $A_n,B_n$ satisfy the conditions of Theorem~\ref{thm:tw}. We may rewrite \eqref{eqn:anbn} as 
\begin{equation} \label{eqn:anbn''}
    b_n=\frac{\frac{A_n}{B_n}-\frac{A_{n-2}}{B_{n-2}}}{\frac{A_{n-1}}{B_{n-1}}-\frac{A_{n-2}}{B_{n-2}}} \cdot \frac{B_{n}}{B_{n-1}}\text{ and }
    a_n=\frac{-\frac{A_n}{B_n}+\frac{A_{n-1}}{B_{n-1}}}{\frac{A_{n-1}}{B_{n-1}}-\frac{A_{n-2}}{B_{n-2}}}\cdot \frac{B_{n}}{B_{n-2}}, \, n\ge 2.
\end{equation}
Using \eqref{eqn:AB'} we see that \eqref{eqn:anbn''} simplifies to
\begin{equation}\label{eqn:anbnBn}
b_n=\frac{2(c_n+c_{n-1})}{c_{n-1}} \text{ and } a_n=\frac{-4c_n}{c_{n-1}}, \text{ for }n \ge 3.
\end{equation}
The remaining cases $n=1,2$ require special attention due to initial conditions. For $n=2,$ we have 
$$b_2=\frac{2(c_1+c_2)}{c_1} \text{ and }a_2=\frac{-4c_2}{c_1}$$
For $n=1,$ we have $B_{n-2}=B_{-1}=0,$ hence we cannot use \eqref{eqn:anbn''}. Instead we use \eqref{eqn:anbn} to obtain
$b_1=2 \text{ and } a_1=2c_1.$ Applying Theorem~\ref{thm:anbn} finishes the proof.
\end{proof}

\begin{proof}[Proof of Corollary~\ref{cor:cfpi}]
Let us apply Theorem~\ref{thm:tw} to 
$$\frac{\pi}{3}=\sum_{k=1}^\infty (-1)^{k+1}\left(\frac{1}{6k-5}+\frac{1}{6k-1}\right).$$
This leads to
$$b_n=\frac{4(36n^2-72n+31)}{(6n-5)(6n-1)(2n-3)} \text{ and } a_n=\frac{(2n-1)(6n-11)(6n-7)}{(6n-5)(6n-1)(2n-3)}, n \ge 2,$$
and  $b_1=2$, $a_1=6/5$. Note that for any nonzero scalar $c$ we have
$\frac{a_1}{b_1}\cplus \frac{a_2}{b_2} \cplus \frac{a_3}{b_3}  \cplus\cdots=\frac{ca_1}{cb_1}\cplus \frac{ca_2}{b_2} \cplus \frac{a_3}{b_3}  \cplus\cdots$. Repeatedly using this fact to clear the denominators we arrive at
$$\pi/3=\frac{-6}{-5}\cplus \frac{-75}{63} \cplus \frac{29645}{278}  \cplus \frac{a_4'}{b_4'}\cplus \frac{a_5'}{b_5'} \cplus\cdots,$$
with $a_n'$ and $b_n'$ given by
$$b_n'=2(36n^2-72n+31) \text{ and } a_n'=(2n-1)(2n-5)(6n-11)^2(6n-7)^2,$$
for any $n \ge 2$.
\end{proof}

\bibliography{mybibfile}

\end{document}